\journalname{Journal}
\begin{document}

\title{On the zero-stability of multistep methods on smooth nonuniform grids}


\author{Gustaf S\"oderlind         \and
        Imre Fekete         \and
        Istv\'{a}n Farag\'o
}


\institute{Gustaf S\"oderlind, \email{Gustaf.Soderlind@na.lu.se,Gustaf.Soderlind@gmail.com} \at
           Centre for Mathematical Sciences, Lund University,
           Box 118, SE-221 00 Lund, Sweden
           \and
           Imre Fekete,  \email{feipaat@cs.elte.hu} \at
           Department of Applied Analysis and Computational Mathematics, E\"{o}tv\"{o}s Lor\'{a}nd University, P\'{a}zm\'{a}ny P. s. 1/C, H-1117 Budapest, Hungary. MTA-ELTE Numerical Analysis and Large Networks Research Group,  P\'{a}zm\'{a}ny P. s. 1/C, H-1117 Budapest, Hungary. Supported by the \'UNKP-17-4 New National Excellence Program of the Ministry of Human Capacities
           \and
           Istv\'{a}n Farag\'{o},  \email{faragois@cs.elte.hu} \at
           Department of Applied Analysis and Computational Mathematics, E\"{o}tv\"{o}s Lor\'{a}nd University, P\'{a}zm\'{a}ny P. s. 1/C, H-1117 Budapest, Hungary. MTA-ELTE Numerical Analysis and Large Networks Research Group,  P\'{a}zm\'{a}ny P. s. 1/C, H-1117 Budapest, Hungary. Supported by the Hungarian Scientific Research Fund OTKA under grants No.\ 112157 and 125119.
           }

\date{Received: date / Accepted: date}

\maketitle

\begin{abstract}
In order to be convergent, linear multistep methods must be zero stable. While constant step size theory was established in the 1950's, zero stability on nonuniform grids is less well understood. Here we investigate zero stability on compact intervals and smooth nonuniform grids. In practical computations, step size control can be implemented using smooth (small) step size changes. The resulting grid $\{t_n\}_{n=0}^N$ can be modeled as the image of an equidistant grid under a smooth deformation map, i.e., $t_n = \Phi(\tau_n)$, where $\tau_n = n/N$ and the map $\Phi$ is monotonically increasing with $\Phi(0)=0$ and $\Phi(1)=1$. The model is justified for any fixed order method operating in its asymptotic regime when applied to smooth problems, since the step size is then determined by the (smooth) principal error function which determines $\Phi$, and a tolerance requirement which determines $N$. Given any strongly stable multistep method, there is an $N^*$ such that the method is zero stable for $N>N^*$, provided that $\Phi \in C^2[0,1]$. Thus zero stability holds on all nonuniform grids such that adjacent step sizes satisfy $h_n/h_{n-1} = 1 + \mathrm O(N^{-1})$ as $N\rightarrow\infty$. The results are exemplified for BDF-type methods.

\keywords{Initial value problems \and linear multistep methods \and BDF methods \and zero stability \and nonuniform grids \and variable step size \and convergence}
\subclass{65L04 \and 65L05 \and 65L06 \and 65L07}
\end{abstract}

\section{Introduction}\label{introduction}

A linear multistep method, discretizing an initial value problem $\dot y = f(t,y)$, is represented by a difference equation of order $k$,
\begin{equation}\label{LMM}
{\frac {1}{h}}\sum_{j=0}^k \alpha_{j}y_{n+j} = \sum_{j=0}^k \beta_{j}f(t_{n+j},y_{n+j}).
\end{equation}
Here the step size $h = t_{n+k}-t_{n+k-1} > 0$ is assumed constant. We denote the forward shift operator by $\mathrm E$ and write the method $h^{-1}\rho(\mathrm E)y_n = \sigma(\mathrm E)f(t_n,y_n)$, with generating polynomials
\begin{equation}\label{rhooperators}
\rho(\zeta) = \sum_{j=0}^k \alpha_{j}\zeta^j = (\zeta - 1) \sum_{j=0}^{k-1} \gamma_{j}\zeta^j = (\zeta-1)\cdot \rho_R(\zeta),\qquad \sigma(\zeta) = \sum_{j=0}^k \beta_{j}\zeta^j.
\end{equation}
These are arranged to have no common factors, and coefficients are normalized by $\sigma(1)=1$. \textit{Zero stability} is necessary for convergence, and requires that all roots of $\rho(\zeta)=0$ lie inside or on the unit circle, with no multiple unimodular roots. Since consistent methods have $\rho(\zeta) = (\zeta-1)\cdot \rho_R(\zeta)$ as indicated above, zero stability is a condition on the \textit{extraneous operator} $\rho_R(\zeta)$. Its zeros are referred to as the extraneous roots. Strong zero stability requires that \textit{all extraneous roots are strictly inside the unit circle}; this is a condition on the $k$ coefficients $\{\gamma_j\}_{j=0}^{k-1}$.

Since the extraneous operator is void in Adams-Moulton and Adams-Basforth methods, these methods are trivially zero stable for variable steps, \cite[p.\ 407]{HairNW1993}. The most important case having a nontrivial extraneous operator is the BDF methods, known to be zero stable for $1\leq k\leq 6$, cf.\ \cite{Cryer1972}, \cite[p.\ 381]{HairNW1993}. Some (nonstiff) method suites, such as the dcBDF and IDC methods \cite{ArevFS2000}, are based on the BDF $\rho$ operator, and have the same zero stability properties for $k\geq 2$. Other examples of nontrivial extraneous operators are the weakly stable explicit midpoint method (two-step method of order 2) and the lesser used weakly stable implicit Milne methods, \cite[p.\ 363]{HairNW1993}.

Adaptive computations are of particular importance for stiff problems, as widely varying time scales call for correspondingly large variations in step size. Of the methods mentioned above, only the BDF family has unbounded stability regions specifically designed for stiff problems. Thus the BDF methods must handle step size variations well, in spite of its extraneous operator, explaining why studies of variable step size zero stability mostly center on the BDF methods, \cite[p.\ 402ff]{HairNW1993}.

Although there are several ways to construct multistep methods on nonuniform grids, we shall only consider the \emph{grid-independent representation} of multistep methods, \cite{ArevSo2016}. This represents a multistep method on any nonuniform grid using a fixed parametrization, defining a computational process where the coefficients $\alpha_{j,n}, \beta_{j,n}$ vary along the solution and depend on $k-1$ consecutive step size ratios. For simplicity, but without loss of generality, let us consider a quadrature problem $\dot y = f(t)$ on $[0,1]$ using variable steps. The multistep method (\ref{LMM}) becomes
\begin{equation}\label{LMMvar}
{\frac {1}{h_{n+k-1}}}\sum_{j=0}^k \alpha_{j,n}y_{n+j} = \sum_{j=0}^k \beta_{j,n}f(t_{n+j}),
\end{equation}
where $h_{n+k-1} = t_{n+k}-t_{n+k-1}$. Letting $y\in C^{p+1}$ denote the exact solution, we obtain
\begin{equation}\label{LMMvarex}
{\frac {1}{h_{n+k-1}}}\sum_{j=0}^k \alpha_{j,n}y(t_{n+j}) = \sum_{j=0}^k \beta_{j,n}f(t_{n+j}) - c_n h^{p}_{n+k-1} y^{(p+1)}(\vartheta),
\end{equation}
provided that $y$ is sufficiently differentiable, and where $\vartheta\in[t_n,t_{n+k}]$. Subtracting (\ref{LMMvarex}) from (\ref{LMMvar}) gives
\begin{equation}\label{LMMvarerror}
{\frac {1}{h_{n+k-1}}} \sum_{j=0}^k \alpha_{j,n}e_{n+j} = c_n h^{p}_{n+k-1} y^{(p+1)}(\vartheta),
\end{equation}
where the global error at $t_n$ is $e_{n} = y_n - y(t_n)$. Here, the local error $c_n h^{p}_{n+k-1} y^{(p+1)}(\vartheta)$ goes to zero if $h_{n+k-1}\rightarrow 0$ (consistency), but convergence ($e_n\rightarrow 0$) in addition requires that solutions to the homogeneous problem
\begin{equation}\label{LMMhom}
{\frac {1}{h_{n+k-1}}} \sum_{j=0}^k \alpha_{j,n}e_{n+j} = 0
\end{equation}
remain bounded. Thus zero stability on nonuniform grids is investigated in terms of the problem $\dot y = 0$ and finding sufficient conditions on the grid partitioning of $[0,1]$, such that the numerical solution $\{y_n\}_0^N$ is uniformly bounded as $N\rightarrow\infty$. This problem has been approached in several different ways, see e.g.\ \cite{ButchHeard2002}, \cite{CrouLi1984}, \cite{Grigor1983}, \cite{GuglZe2001}, usually with the aim of finding precise bounds on the step size ratios, such that the method remains convergent. Since the method coefficients change from step to step, most analyses become highly complicated. For example, the problem can be addressed by studying infinite products of companion matrices associated with the recursion (\ref{LMMhom}), \cite[p.\ 403]{HairNW1993}, or by considering the nonuniform grid as a ``perturbation'' of an equidistant grid, by letting the step size vary smoothly, \cite{GearTu1974}.

An overview is given in \cite[p.\ 402ff]{HairNW1993}, but the classical results focus on the existence of local step size ratio bounds that guarantee zero stability. By constrast, our focus is on grid smoothness. Using (near) Toeplitz operators, our aim is to develop a proof methodology for adaptive computation, aligned with the formal convergence analysis in the Lax--Stetter framework, cf.\ \cite{Stette1973}. We let the grid points be given by a strictly increasing sequence $\{t_n\}_0^N$ and define the step sizes by $h_n = t_{n+1}-t_n$, requiring that $h_n\rightarrow 0$ for every $n$ as $N\rightarrow\infty$. If the grid is smooth enough, then any multistep method which is strongly zero stable on a uniform grid is also zero stable on the nonuniform grid for $N$ large enough.

The main result has the following structure. Every multistep method is associated with two constants, $C_0$ and $C_k$, where the former only depends on constant step size theory, and is bounded if the method is strongly zero stable on a uniform grid. The second constant depends on the first order variation of the method's coefficients for infinitesimal step size variations, and is computable using a suitable computer algebra system such as \textsc{Maple} or \textsc{Mathematica}. Finally, grid smoothness will be characterized in terms of a differentiable grid deformation map, requiring a bound on a function of the form $\varphi'/\varphi$. Under these conditions, the method is zero stable on the non-uniform grid provided that
$$
{\frac {\max |\varphi'/\varphi|}{N}}\cdot C_0\cdot C_k < 1.
$$
This separates method properties and grid properties, and only requires that the total number of steps $N$ is large enough. The important issues are to generate a smooth step size sequence (which automatically manages step size ratios), and using a sufficiently small error tolerance, which determines $N$. Although such step size sequences can easily be constructed in adaptive computation, \cite{Soderl2003}, most multistep codes still use comparatively large step size changes, violating the smoothness conditions required for zero stability. This has been demonstrated to be a likely cause of poor computational stability observed in practice, \cite{SoderWang2006}. In production codes it is often thought that frequent, small step size changes are not ``worthwhile,'' but the present paper and classical theory only support such step size changes.

Our approach is intended as an analysis tool for deriving a rigorous convergence proof for adaptive multistep methods, redefining practical implementation principles. A full convergence analysis of the initial value problem $\dot y = f(t,y)$ requires further attention to detail, as it also involves the Lipschitz continuity of the vector field $f$ with respect to $y$, as well as (for implicit methods) the solvability of equations of the form $v = \gamma\, hf(t,v) + w$. The solvability will depend on the magnitude of the Lipschitz constant $L[\gamma\,hf]$ or the logarithmic Lipshitz constant $M[\gamma\,hf]$, see e.g.\ \cite{Soderl2006}. Likewise, error bounds will depend on these quantities. Here, however, we only focus on zero stability, which can be fully characterized in the simpler setting of a quadrature problem. We shall return to the full convergence analysis on smooth nonuniform grids in a forthcoming study.

\section{Smooth nonuniform grids}

If an initial value problem has a smooth solution, then the step size sequence, keeping the local error (nearly) constant, is also smooth, \cite{GearTu1974}, \cite{Shampi1985}. A smooth sequence is also known to be necessary in connection with e.g.\ Hamiltonian problems, \cite{HairSo2005}, as well as in finite difference methods for boundary value problems. For these reasons, we shall model nonuniform grids by a smooth deformation of an equidistant grid. We only consider compact intervals.\medskip

\noindent
\textbf{Adaptive computation.} The asymptotic behavior of the local error per unit step in a multistep method is of the form $l_n = ch_n^{p}y^{(p+1)}$. The most common step size control in adaptive computation aims to keep $\|l_n\|$ constant, equal to a given local error tolerance $\epsilon$. Representing the step size in terms of a \emph{step size modulation function} $\mu(t)$ allows the step size at time $t$ to be expressed as $h = \mu(t)/N$, so that the ``ideal'' step size sequence can be modeled by 
$$
c \left( {\frac {\mu(t_n)}{N}} \right)^{p} \|y^{(p+1)}(t_n)\| = \epsilon.
$$
It follows that $N \sim \epsilon^{-1/p}$. In other words, \textit{the local error tolerance determines} $N$. By contrast, $\mu(t)$ is \textit{determined by the problem}. It is smooth if $y^{(p+1)}(t)$ is smooth, since
$$
\mu(t) \sim \|y^{(p+1)}(t)\|^{-1/p}.
$$
In real adaptive computations, a step size control of the form $h_n = r_{n-1}h_{n-1}$ is used, where the step ratio sequence is processed by a digital filter to generate a smooth step size sequence, \cite{Soderl2003}. This may e.g.\ take the form
$$
r_{n-1} = \left( {\frac {\epsilon}{\|l_{n-1}\|}}\right)^{b_1/p} \left( {\frac {\epsilon}{\|l_{n-2}\|}} \right)^{b_2/p} r_{n-2}^{-a_1},
$$
where $l_{n-1}$ and $l_{n-2}$ are local error estimates and $(b_1,b_2,a_1)$ are the filter parameters. The controller keeps the local error close to the tolerance $\epsilon$. As a consequence the step ratios will remain near $1$. Further, reducing the tolerance $\epsilon$ increases $N$, reducing step sizes as well as \emph{step ratios}. Thus it is justified to model a nonuniform grid by a smooth grid deformation, and such a grid can be generated using a proper filter to continually adjust the step size. It also corresponds well to the behavior observed in computational practice when such step size controllers are employed.
\medskip

\noindent
\textbf{Modeling a smooth nonuniform grid.} Let $\Phi:\tau\mapsto t$ be a smooth, strictly increasing map in $C^2[0,1]$, satisfying $\Phi(0)=0$ and $\Phi(1)=1$. Further, let its derivative $\Phi' = \mathrm d\Phi/\mathrm d\tau$ be denoted by $\varphi$ and assume that $\varphi'/\varphi \in L^{\infty}[0,1]$. Now, given $N$, let $\tau_n = n/N$ and construct a smooth nonuniform grid $\{t_n\}_{n=0}^N$ by
\begin{equation}\label{gridmap}
t_n = \Phi(\tau_n).
\end{equation}
Since $t = \Phi(\tau)$ we have the differential relation $\mathrm d\,t = \varphi(\tau)\,\mathrm d\tau$. By a discrete correspondence, mesh widths are related by $\Delta t \approx \varphi(\tau)\, \Delta\tau$. Thus we model the  \emph{step size sequence} $\{h_n\}_{n=0}^{N-1}$ by
\begin{equation}\label{stepseq}
h_n = t_{n+1} - t_n = \Phi(\tau_{n+1})-\Phi(\tau_{n}) \approx \varphi(\tau_{n+1/2})/N.
\end{equation}
Hence $h_n \rightarrow 0$ as $N\rightarrow\infty$. This allows us to study zero stability on nonuniform grids in terms of the single-parameter limit $N\rightarrow\infty$. This does not substantially restrict $h_{\max}/h_{\min}$ during the overall integration, although adjacent step ratios will be small.\medskip

\noindent
\textbf{Step ratios.} The coefficients of a multistep method on a nonuniform grid depend on the ratio of adjacent step sizes. By (\ref{stepseq}) the \emph{step ratios} $\{r_n\}_{n=0}^{N-2}$ are given by
\begin{equation}\label{stepratio}
r_{n-1} = {\frac {h_n}{h_{n-1}}} \approx {\frac {\varphi(\tau_{n+1/2})}{\varphi(\tau_{n-1/2})}} \approx {\frac {\varphi(\tau_n) + \varphi'(\tau_n)/(2N)}{\varphi(\tau_n) - \varphi'(\tau_n)/(2N)}} \approx 1 + {\frac {\varphi'(\tau_n)}{N\varphi(\tau_n)}}.
\end{equation}
Hence the step ratios approach $1$ as $N\rightarrow\infty$, i.e., \emph{locally the method behaves like a constant step size method} for $N$ large enough, since we assumed $\varphi'/\varphi \in L^{\infty}[0,1]$.

It is also of interest to represent the step size change as a relative step size increment, which, in view of (\ref{stepratio}), is defined by
\begin{equation}\label{stepinc}
r_{n-1} = 1 + v_{n-1}\quad \Rightarrow \quad v_{n-1} \approx {\frac {\varphi'(\tau_n)}{N \varphi(\tau_n)}}.
\end{equation}
Thus $v_{n-1}\rightarrow 0$ as $N\rightarrow \infty$, and in practical computations the relative step size increment is invariably small.

The assumption $\varphi'/\varphi \in L^{\infty}[0,1]$ requires that $\log \varphi \in C^1[0,1]$. By a stronger assumption, $\log \varphi \in C^2[0,1]$, we can also estimate the change in the step size ratios,
$$
{\frac {r_n}{r_{n-1}}} = {\frac {h_{n+1}h_{n-1}}{h_n^2}} \approx 1 + {\frac {1}{N^2}}\cdot {\frac {\varphi\varphi'' - (\varphi')^2 }{\varphi^2 }} \approx 1 + {\frac {1}{N^2}}\cdot {\frac {\mathrm d}{\mathrm d\tau}} \left({\frac {\varphi'}{\varphi}}\right) \rightarrow 1,
$$
where $\varphi$ and its derivatives are evaluated at $\tau_n$. Thus the ratio of successive step ratios approach $1$ even faster than the step ratios themselves. The interpretation is that step ratios change slowly, and there may be long strings of consecutive steps where the step size ``ramps up'' as the solution to the ODE gradually becomes smoother after a transient phase. This corresponds to a gradual stretching of the mesh width.
\medskip

\noindent
\textbf{Step sizes and ratios as a function of $t$.} Using $t=\Phi(\tau)$ and $\mathrm d\,t = \varphi \,\mathrm d\tau$, the step size modulation function $\mu(t)$ and the derivative $\varphi(\tau)$ satisfy the functional relation
\begin{equation}\label{funcrel}
\mu(t) = \varphi(\tau).
\end{equation}
Differentiating (\ref{funcrel}) with respect to $t$ and denoting time derivatives by a dot to distinguish them from derivatives with respect to $\tau$, we obtain $\dot \mu \,\mathrm d\,t = \varphi' \, \mathrm d\tau$. Hence
\begin{equation}\label{diffrel}
\dot\mu = \varphi' \cdot {\frac {\mathrm d\tau}{\mathrm d\,t}} = {\frac {\varphi'}{\varphi}},
\end{equation}
allowing us to express step sizes, step ratios, and relative step size increments along the solution of the differential equation, as functions of $t$,
\begin{equation}\label{tdepdata}
h_{n} \approx {\frac {\mu(t_{n+1/2})}{N}}\,;\qquad r_{n-1} \approx 1 + {\frac {\dot\mu(t_n)}{N}}\,;\qquad v_{n-1} \approx {\frac {\dot\mu(t_n)}{N}}.
\end{equation}
Obviously, the previous assumption $\varphi'/\varphi \in L^{\infty}[0,1]$ is equivalent to $\dot\mu \in L^{\infty}[0,1]$. Since $\mu(t) \sim \|y^{(p+1)}(t)\|^{-1/p}$, the assumptions on the deformation map $\Phi$ are realistic and reflect problem regularity.\medskip

\section{Deflation and operator factorization}

The variable step size difference equation
\begin{equation}\label{LMMhom1}
{\frac {1}{h_{n+k-1}}} \sum_{j=0}^k \alpha_{j,n}y_{n+j} = 0,
\end{equation}
can be rewritten in matrix--vector form as
\begin{equation}\label{LMMmatrix}
H_N^{-1}A_{N}(\varphi) y = H_N^{-1}Y_0,
\end{equation}
where the vector $y$ contains all successive approximations $\{y_n\}_{n=1}^{N}$. The vector $Y_0$ is constructed from the initial conditions, $y_0, \dots y_{-k+1}$. Further, $A_{N}(\varphi)$ is an $N\times N$ matrix containing the method coefficients, and is associated with a nonuniform grid characterized by the function $\varphi$. The step sizes are represented by a diagonal matrix $H_N = \tilde\varphi/N$,
\begin{equation}\label{StepMatrix}
H_N = {\mathrm {diag}}(h_0, h_1,\dots h_{N-1}) = {\frac {1}{N}} \, {\mathrm {diag}}(\varphi_{1/2}, \varphi_{3/2},\dots \varphi_{N-1/2}) = {\frac {\tilde\varphi}{N}},
\end{equation}
where $\varphi_{j+1/2} \approx \varphi(\tau_{j+1/2})$. For example, if $k=2$, the matrix $H_N^{-1}A_{N}(\varphi)$ takes the lower tridiagonal form
$$
H_N^{-1}A_{N}(\varphi) = N \tilde\varphi^{-1}
 \begin{pmatrix}
  \alpha_{2,0} & 0 & 0 & 0 & 0\\
  \alpha_{1,1} & \alpha_{2,1} & \cdots & 0 & 0\\
  \alpha_{0,2} & \alpha_{1,2} & \alpha_{2,2} & \cdots & 0 \\
  0 & \alpha_{0,3} & \alpha_{1,3} & \alpha_{2,3} & \cdots \\
 & \ddots  & \ddots  & \ddots & \vdots  \\
  0 & \cdots & \alpha_{0,N-1}  & \alpha_{1,N-1} & \alpha_{2,N-1}
 \end{pmatrix}.
$$
We will investigate zero stability as a question of whether there exists a constant $C_{\varphi}$, independent of $N$, and an $N^*$, such that $\|A_{N}^{-1}(\varphi)H_N\| \leq C_{\varphi}$ for all $N > N^*$. As $\varphi(\tau) \equiv 1$ corresponds to a uniform grid, $A_{N}(1)$ denotes the Toeplitz matrix of method coefficients for constant step size $H_N = I/N$. Then zero stability is equivalent to $\|A_{N}^{-1}(1)/N\| \leq C_1$ for all $N$.

Just as the principal root can be factored out of $\rho$ to construct the extraneous operator $\rho_R(\zeta)$, satisfying $\rho(\zeta) = (\zeta - 1)\rho_R(\zeta)$, a similar factorization holds for the (near) Toeplitz operators. Thus, due to preconsistency ($\rho(1)=0$), the $n^{\mathrm {th}}$ full row sum of the matrix $A_{N}(\varphi)$ is
\begin{equation}\label{zerosum}
\sum_{j=0}^k \alpha_{j,n}(\varphi) = 0,
\end{equation}
even on a nonuniform grid. Denoting the $n^{\mathrm {th}}$ row of nonzeros in $A_{N}(\varphi)$ by a $k+1$ vector $a_n^{\mathrm T}(\varphi)$, and letting $\mathbf {1}_{k+1} = (1\quad 1\dots 1)^{\mathrm T}$ denote a $k+1$ vector of unit elements, preconsistency can be written
\begin{equation}\label{unitorthogonal}
a_n^{\mathrm T}(\varphi)\,\mathbf {1}_{k+1} = 0.
\end{equation}
Hence $a_n^{\mathrm T}(\varphi)$ contains a difference operator. It can therefore be written as a convolution of a $k$-vector $c_n^{\mathrm T}(\varphi)$ and the backward difference operator $\nabla = (-1\quad 1)$, i.e.,
\begin{equation}\label{convolution}
a_n^{\mathrm T}(\varphi) = c_n^{\mathrm T}(\varphi) * \nabla.
\end{equation}
For example, for the constant step size BDF2 method, corresponding to $\alpha_0 = 1/2$, $\alpha_1 = -2$ and $\alpha_2 = 3/2$, the convolution can be represented as
$$
a_n^{\mathrm T}(1) = \left({\frac {1}{2}}\quad -\!2 \quad\:\: {\frac {3}{2}}\right) = \left(0\ \ -{\frac {1}{2}} \quad {\frac {3}{2}}\right) - \left(-{\frac {1}{2}} \quad {\frac {3}{2}} \quad 0 \right),
$$
implying that
$$
c_n^{\mathrm T}(1) = \left(-{\frac {1}{2}} \quad {\frac {3}{2}}\right).
$$
Thus the vector $c_n^{\mathrm T}(1)$ is the $n^{\mathrm {th}}$ full row of nonzero elements of the extraneous operator, corresponding to the coefficients $\gamma_0 = -1/2$ and $\gamma_1 = 3/2$ of the deflated polynomial $\rho_R(\zeta)$. Table \ref{tableBDF} lists the row elements $a_n^{\mathrm T}(1)$ and $c_n^{\mathrm T}(1)$, respectively, for all zero stable BDF methods of step numbers $k\geq 2$.

\begin{table}[tp]
\hspace{30 mm}
\textbf{Coefficients of $A_{k,N}(1)$ and $R_{k,N}(1)$ for BDF2 -- BDF6 methods}
\vspace{-3 mm}

\begin{center}
\begin{tabular}{cccccccc}
\hline\hline\\
\vspace{-3mm}
\textbf{BDF2} \quad $a_n^{\mathrm T}(1)$ & & & & & $1/2$ & $-2$ & $\mathbf{3/2}$ \\
& & & & & & & \\
\hspace{10.5mm} $c_n^{\mathrm T}(1)$ & & & & & & $-1/2$ & $\mathbf{3/2}$ \\
\\
\vspace{-3mm}
\textbf{BDF3} \quad $a_n^{\mathrm T}(1)$ & & & & $-1/3$ & $3/2$ & $-3$ & $\mathbf{11/6}$ \\
& & & & & & & \\
\hspace{10.5mm} $c_n^{\mathrm T}(1)$ & & & & & $1/3$ & $-7/6$ & $\mathbf{11/6}$\\
\\
\vspace{-3mm}
\textbf{BDF4} \quad $a_n^{\mathrm T}(1)$ & & & $1/4$ & $-4/3$ & $3$ & $-4$ & $\mathbf{25/12}$ \\
& & & & & & & \\
\hspace{10.5mm} $c_n^{\mathrm T}(1)$ & & & & $-1/4$ & $13/12$ & $-23/12$ & $\mathbf{25/12}$\\
\\
\vspace{-3mm}
\textbf{BDF5} \quad $a_n^{\mathrm T}(1)$ & & $-1/5$ & $5/4$ & $-10/3$ & $5$ & $-5$ & $\mathbf{137/60}$ \\
& & & & & & & \\
\hspace{10.5mm} $c_n^{\mathrm T}(1)$ & & & $1/5$ & $-21/20$ & $137/60$ & $-163/60$ & $\mathbf{137/60}$\\
\\
\vspace{-3mm}
\textbf{BDF6} \quad $a_n^{\mathrm T}(1)$ & $1/6$ & $-6/5$ & $15/4$ & $-20/3$ & $15/2$ & $-6$ & $\mathbf{147/60}$ \\
& & & & & & & \\
\hspace{10.5mm} $c_n^{\mathrm T}(1)$ & & $-1/6$ & $31/30$ & $-163/60$ & $79/20$ & $-71/20$ & $\mathbf{147/60}$ \\
\\
\hline\hline
\end{tabular}
\caption{Standard constant step size coefficients of BDF$k$ methods denoted by $a_n^{\mathrm T}(1)$, with diagonal elements of the Toeplitz operator $A_{k,N}(1)$ in boldface. The elements appear in each row of $A_{k,N}(1)$, to the left of (below) the diagonal. The corresponding coefficients of the extraneous operator $R_{k,N}(1)$ are denoted by $c_n^{\mathrm T}(1)$, also with diagonal elements in boldface. Note that $c_n^{\mathrm T}(1)\,\mathbf {1}_{k} = 1$.}\label{tableBDF}
\end{center}
\end{table}

Unlike generating polynomials, the (near) Toeplitz operators have the advantage of applying also to nonuniform grids. The following factorization of $H_N^{-1}A_{N}(\varphi)$ is then a matrix representation of the deflation operation described above.

\begin{theorem}\label{FactorizationLemma} Consider a linear multistep method on a nonuniform grid characterized by $\varphi$, and let $H_N$ and $\tilde\varphi$ be defined by (\ref{StepMatrix}). Then $H_N^{-1}A_{N}(\varphi)$ has a factorization
\begin{equation}\label{factorization1}
H_N^{-1}A_{N}(\varphi) = \tilde\varphi^{-1} R_N(\varphi) \cdot \mathbf D_N,
\end{equation}
where $R_N(\varphi)$ is the \emph{extraneous operator}, dependent on the nonuniform grid, and
\begin{equation}\label{factorization2}
\mathbf D_N =  N\begin{pmatrix}
  1 & 0 & 0 & 0 & 0\\
  -1 & 1 & \cdots & 0 & 0\\
  0 & -1 & 1 & \cdots & 0 \\
 & \ddots  & \ddots  & \ddots & \vdots  \\
  0 & \cdots & 0 & -1 & 1
 \end{pmatrix}.
\end{equation}
The \emph{simple integrator} $\mathbf D_N^{-1}$ is stable, and for all $N\geq 1$ it holds that $\|\mathbf D_N^{-1}\|_{\infty} = 1$.
\end{theorem}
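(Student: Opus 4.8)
The plan is to establish the factorization first as the matrix analogue of the scalar deflation $\rho(\zeta)=(\zeta-1)\rho_R(\zeta)$, and then to evaluate $\|\mathbf D_N^{-1}\|_{\infty}$ by an explicit computation.

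First I would invoke preconsistency in the form (\ref{zerosum})--(\ref{unitorthogonal}): for every row index $n$ the nonzero entries $a_n^{\mathrm T}(\varphi)$ satisfy $a_n^{\mathrm T}(\varphi)\,\mathbf{1}_{k+1}=0$, so each row is annihilated by the constant vector and therefore carries a backward difference. By (\ref{convolution}) this gives the convolution splitting $a_n^{\mathrm T}(\varphi)=c_n^{\mathrm T}(\varphi)*\nabla$ with $\nabla=(-1\ \ 1)$. The key observation is that convolving on the right by $\nabla$ is precisely right-multiplication by the lower bidiagonal difference matrix $\mathbf D_N/N$ of (\ref{factorization2}). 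Collecting the deflated rows $c_n^{\mathrm T}(\varphi)$ into a matrix $R_N(\varphi)$, the identity $a_n^{\mathrm T}(\varphi)=c_n^{\mathrm T}(\varphi)*\nabla$ then holds for all rows simultaneously exactly when $A_N(\varphi)=R_N(\varphi)\cdot(\mathbf D_N/N)$. Substituting $H_N^{-1}=N\tilde\varphi^{-1}$ from (\ref{StepMatrix}) yields $H_N^{-1}A_N(\varphi)=N\tilde\varphi^{-1}R_N(\varphi)\cdot(\mathbf D_N/N)=\tilde\varphi^{-1}R_N(\varphi)\,\mathbf D_N$, which is (\ref{factorization1}).

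For the norm claim I would note that $\mathbf D_N/N$ is the lower bidiagonal matrix $B$ with unit diagonal and $-1$ on the first subdiagonal, so that $\mathbf D_N^{-1}=B^{-1}/N$. A short telescoping check shows that $B^{-1}$ is the lower triangular matrix of all ones, i.e.\ the discrete summation (integration) operator, since the product $B\,B^{-1}$ collapses entrywise to the Kronecker delta. Hence $\mathbf D_N^{-1}$ has entries equal to $1/N$ on and below the diagonal and $0$ above it. The $\infty$-norm is the maximal absolute row sum; row $i$ has exactly $i$ nonzero entries, each of size $1/N$, so the maximum is attained in the last row and equals $N\cdot(1/N)=1$. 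Thus $\|\mathbf D_N^{-1}\|_{\infty}=1$ for every $N\geq1$, and in particular $\mathbf D_N^{-1}$ is stable in the sense of being uniformly bounded in $N$.

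The only step requiring genuine care is the well-definedness of $R_N(\varphi)$ at the first few (truncated) rows of the finite matrix, where the difference stencil is cut by the initial data absorbed into $Y_0$; there I expect to verify that the deflation remains consistent, so that the single bidiagonal factor $\mathbf D_N$ is common to every row. Everything else is immediate: the interior factorization is already encoded in the scalar convolution structure, and $\mathbf D_N^{-1}$ is simply the scaled summation operator, whose row sums are trivial to bound.
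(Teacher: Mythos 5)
Your proposal is correct and follows essentially the same route as the paper: the factorization is exactly the matrix form of the row-wise deflation $a_n^{\mathrm T}(\varphi)=c_n^{\mathrm T}(\varphi)*\nabla$ already set up in (\ref{zerosum})--(\ref{convolution}) (the paper treats this part as immediate and proves only the norm claim), and your norm argument --- identifying $\mathbf D_N^{-1}$ as the scaled cumulative-summation operator and bounding row sums by $N\cdot(1/N)=1$ --- is the paper's proof verbatim. The boundary-row issue you flag resolves at once because all matrices involved are lower triangular, so passing to leading principal $N\times N$ submatrices commutes with the matrix product and the truncated rows on both sides of $A_N(\varphi)=R_N(\varphi)\,(\mathbf D_N/N)$ match automatically.
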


\begin{proof} We only need to prove the latter statement. By induction we see that the integrator is a cumulative summation operator,
\begin{equation}\label{inverse}
\mathbf D_N^{-1} = {\frac {1}{N}} \cdot
 \begin{pmatrix}
  1 & 0 & 0 & 0 & 0\\
  1 & 1 & \cdots & 0 & 0\\
 1 & 1 & 1 & \cdots & 0 \\
 & \ddots  & \ddots  & \ddots & \vdots  \\
  1 & \cdots & 1  & 1 & 1
 \end{pmatrix},
\end{equation}
and it immediately follows that $\|\mathbf D_N^{-1}\|_{\infty} = 1$ for all $N$.
\end{proof}

To establish zero stability we need to show that $(H_N^{-1}A_{N}(\varphi))^{-1} = \mathbf D_N^{-1} R_N^{-1}(\varphi)\tilde\varphi$ is uniformly bounded as $N\rightarrow \infty$. We shall use the uniform norm throughout. Since it formally holds that
$$
\|(H_N^{-1}A_{N}(\varphi))^{-1}\|_{\infty} \leq \|\mathbf D_N^{-1}\|_{\infty}\cdot \|R_N^{-1}(\varphi)\|_{\infty}\cdot\|\tilde\varphi\|_{\infty},
$$
where $\|\tilde\varphi\|_{\infty}$ is bounded for all smooth grids, the remaining difficulty is to show that $\|R_N^{-1}(\varphi)\|_{\infty}\leq C_{\varphi}$ for all $N>N^*$, and how this depends on grid regularity. For a unform grid, zero stability is determined by the roots of the extraneous operator; this needs to be translated into norm conditions. A simple possibility is to use the fact that
$$
m_{\infty}[R_N(1)] > 0 \quad \Rightarrow \quad \|R_{N}^{-1}(1)\|_{\infty} \leq {\frac {1}{m_{\infty}[R_N(1)]}},
$$
where $m_{\infty}[R_N(1)]$ is the lower logarithmic norm of $R_N(1)$, see \cite{Soderl2006}. The condition $m_{\infty}[R_N(1)] > 0$ is equivalent to \textit{diagonal dominance}. For example, by Table \ref{tableBDF}, the BDF2 matrix $NA_{2,N}(1)$ associated with the $\rho$ operator has the factorization
\begin{equation}\label{extraBDF2}
NA_{2,N}(1) =
 \begin{pmatrix}
  3/2 & 0 & 0 & 0 & 0\\
  -1/2 & 3/2 & \cdots & 0 & 0\\
 0 & -1/2 & 3/2 & \cdots & 0 \\
 & \ddots  & \ddots  & \ddots & \vdots  \\
  0 & \cdots & 0  & -1/2 & 3/2
 \end{pmatrix} \cdot \mathbf D_N = R_{2,N}(1) \cdot \mathbf D_N.
\end{equation}
where the nonzero coefficients correspond to the $c_n^{\mathrm T}(1)$ vector of Table \ref{tableBDF}. Since
\begin{equation}\label{mP2}
m_{\infty}[R_{2,N}(1)] = {\frac {3}{2}}-{\frac {1}{2}} = 1 > 0,
\end{equation}
it follows that $\| R_{2,N}^{-1}(1) \|_{\infty} \leq 1/m_{\infty}[ R_{2,N}(1) ] = 1$ and that the BDF2 method is zero stable. The same technique works for the BDF3 method, since
$$
m_{\infty}[R_{3,N}(1)] = {\frac {11}{6}}-{\frac {7}{6}} - {\frac {1}{3}} = {\frac {1}{3}} > 0.
$$
However, it fails for the BDF4 method and higher, since the extraneous operator is then no longer diagonally dominant. By instead computing e.g.\ the Euclidean norm numerically, the above technique can be extended to BDF4 and BDF5, but it again fails for BDF6. For this reason, we need a general result, based on sharper estimates.

\begin{theorem}\label{T0inv}
For every strongly stable $k$-step method on a uniform grid, there is a constant $C_0 < \infty$, such that $\|R_{k,N}^{-1}(1)\|_{\infty} \leq C_0$ for all $N\geq 1$.
\end{theorem}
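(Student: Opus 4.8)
\section*{Proof proposal}

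The plan is to recognize $R_{k,N}(1)$ as a banded lower-triangular Toeplitz matrix, compute its inverse symbolically, and thereby reduce the norm bound to an absolute-summability statement for the Taylor coefficients of a rational function. By Table~\ref{tableBDF} and the convolution structure (\ref{convolution}), $R_{k,N}(1)$ is the lower-triangular Toeplitz matrix whose diagonal carries $\gamma_{k-1}$, whose first subdiagonal carries $\gamma_{k-2}$, and so on down to $\gamma_0$ on the $(k-1)$-st subdiagonal. Writing $J$ for the nilpotent subdiagonal shift (so $J^N = 0$), we have $R_{k,N}(1) = \sum_{m=0}^{k-1}\gamma_{k-1-m}J^m = t(J)$, where $t(z) = \sum_{m=0}^{k-1}\gamma_{k-1-m}z^m = z^{k-1}\rho_R(1/z)$ is the reversal of the extraneous polynomial. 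Since the constant term $t(0) = \gamma_{k-1} = \alpha_k \neq 0$, the matrix is invertible.

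The first key step is that lower-triangular Toeplitz matrices form a commutative algebra isomorphic, via $J \mapsto z$, to polynomials modulo $z^N$. Hence $R_{k,N}^{-1}(1) = s(J)$, where $s(z) = 1/t(z)\bmod z^N$; explicitly, $R_{k,N}^{-1}(1)$ is again lower-triangular Toeplitz, its entries being the first $N$ Taylor coefficients $s_0,s_1,\dots,s_{N-1}$ of $1/t(z) = \sum_{m\ge 0}s_m z^m$. Because a lower-triangular Toeplitz matrix attains its uniform norm on the bottom (fullest) row, I would record the elementary identity $\|R_{k,N}^{-1}(1)\|_\infty = \sum_{m=0}^{N-1}|s_m| \le \sum_{m=0}^{\infty}|s_m|$. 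Everything thus reduces to showing that the Taylor coefficients of $1/t$ are absolutely summable, with a bound independent of $N$.

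This absolute summability is where strong stability enters, and it is the analytic heart of the argument. Strong stability means every extraneous root $\zeta_i$ satisfies $|\zeta_i|<1$; since the nonzero roots of $t$ are precisely the reciprocals $1/\zeta_i$ (the reversal sends roots inside the disk to roots outside it, and $t(0)\neq 0$ rules out a spurious root at the origin), all zeros of $t$ lie strictly outside the closed unit disk. Consequently $1/t$ is analytic on a disk of radius strictly greater than $1$, and by Cauchy's estimate on a circle of radius $1/\theta$, with $\max_i|\zeta_i| < \theta < 1$, its coefficients decay geometrically, $|s_m| \le C\theta^m$. Setting $C_0 := \sum_{m=0}^{\infty}|s_m| \le C/(1-\theta) < \infty$ then yields $\|R_{k,N}^{-1}(1)\|_\infty \le C_0$ for all $N\ge 1$, with $C_0$ depending only on the method through its extraneous roots and coefficients, as required.

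The main obstacle I anticipate is making the coefficient-decay estimate both quantitative and uniform: one must pass rigorously from ``all zeros of $t$ lie outside the closed unit disk'' to a geometric bound on $s_m$ that is independent of $N$ (either through a partial-fraction decomposition of the rational function $1/t$, or through the Cauchy estimate above), and confirm that truncating the power series at order $N$ only \emph{decreases} the relevant partial sum, so the bound $C_0$ is genuinely $N$-independent. The remaining ingredients --- the algebra isomorphism, the reciprocal correspondence of roots, and the nonvanishing of the diagonal entry $\gamma_{k-1}$ --- are structural and routine.
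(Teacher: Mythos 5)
Your proposal is correct and follows essentially the same route as the paper: both identify $R_{k,N}(1)$ as a lower-triangular Toeplitz matrix whose inverse is again lower-triangular Toeplitz, reduce $\|R_{k,N}^{-1}(1)\|_{\infty}$ to the $l^1$ norm of that inverse's coefficient sequence, and deduce geometric decay of those coefficients from strong stability. The only inessential difference lies in how the decay is justified --- the paper notes that the coefficients solve the difference equation $\rho_R(\mathrm E)u = 0$ and invokes the strict root condition, whereas you apply Cauchy estimates to $1/t(z)$ with $t(z) = z^{k-1}\rho_R(1/z)$ --- two standard, equivalent formulations of the same fact.
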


\begin{proof}
Let $T_0$ denote the lower triangular Toeplitz matrix $R_{k,N}(1)$ representing the extraneous operator. Then $T_0^{-1}$ is lower triangular too, albeit full. More importantly, $T_0^{-1}$ is also Toeplitz. By (\ref{rhooperators}), $\rho_R(\zeta) = \sum_{j=0}^{k-1} \gamma_j\zeta^j$. Noting that $\alpha_k = \gamma_{k-1}$, and illustrating the matrix $T_0$ for $k=3$, we have
$$
T_0 =
\begin{pmatrix}
  \gamma_{2} & 0 & \cdots & 0 & 0\\
  \gamma_{1} & \gamma_{2} & \cdots & \cdots & 0\\
  \gamma_{0} & \gamma_{1} & \gamma_{2} & \cdots & \\
  0 & \ddots  & \ddots  & \ddots & \vdots  \\
  \dots & 0 & \gamma_{0} & \gamma_{1} & \gamma_{2}
 \end{pmatrix}=
\alpha_{3} \begin{pmatrix}
  1 & 0 & \cdots & 0 & 0\\
  \delta_{1} & 1 & \cdots & \cdots & 0\\
  \delta_{0} & \delta_{1} & 1 & \cdots & \\
  0 & \ddots  & \ddots  & \ddots & \vdots  \\
  \cdots & 0 & \delta_{0} & \delta_{1} & 1
 \end{pmatrix} = \alpha_{3} \hat T_0,
$$
where, in the general case, $\delta_j = \gamma_j/\alpha_{k}$ are the elements of the scaled matrix $\hat T_0$, with Toeplitz inverse
$$
\hat T_0^{-1} = \begin{pmatrix}
  1 & 0 & \dots & 0 & 0\\
  u_2 & 1 & \cdots & \cdots & 0\\
  u_3 & u_2 & 1 & \cdots & \\
  \vdots & \ddots  & \ddots  & \ddots & \vdots  \\
  u_N & \dots & u_3 & u_2 & 1
 \end{pmatrix}.
$$
Hence $\|\hat T_0^{-1}\|_{\infty} \leq C$ as $N\rightarrow \infty$ if and only if the sequence $u = \{u_n\}_{n=1}^{N}$ (where we define $u_1=1$) is in $l^1$, i.e., the sequence $u$ must be absolute summable as $N\rightarrow \infty$. By construction, $u$ satisfies the difference equation $\rho_R(\mathrm E) u = 0$, where $\mathrm E$ is the forward shift operator. By assumption $\rho_R(\zeta)$ satisfies the strict root condition. Therefore $u$ is bounded, i.e., $u\in l^{\infty}$. Let $\rho_R(\zeta_{\nu})=0$ and let
$$
\max_{\nu} |\zeta_{\nu}| \leq q < 1,
$$
where equality applies whenever the maximum modulus root is simple. Then there is a constant $K < \infty$ such that $|u_n| \leq K \cdot q^n$ for all $n\geq 1$. Hence $u\in l^{1}$, as
$$
\|u\|_1 = \sum_1^N |u_n| \leq K \sum_1^{\infty} q^n = {\frac {Kq}{1-q}}.
$$
Since $\|\hat T_0^{-1}\|_{\infty} = \|u\|_1$ due to the Toeplitz structure of $\hat T_0^{-1}$, we have, for all $N\geq 1$,
$$
\|R_{k,N}^{-1}(1)\|_{\infty} = \|T_0^{-1}\|_{\infty} \leq {\frac {Kq}{(1-q)\cdot \alpha_{k}}} \leq C_0,
$$
and the proof is complete.
\end{proof}

\section{Zero stability on nonuniform grids -- the BDF2 method}

The general proof of variable step size zero stability is based on the operator factorization given by Theorem \ref{FactorizationLemma}. Beginning with an example, the variable step size BDF2 discretization of $\dot y = 0$ is
\begin{equation}\label{bdf2full}
{\frac {1}{2h_{n+1}}} (r_n^2y_n - (1 + r_n)^2y_{n+1} + (1 + 2r_n) y_{n+2}) = 0,
\end{equation}
where $r_n = h_{n+1}/h_n$ is the step ratio. Rearranging terms, we obtain
\begin{equation}\label{bdf2fac}
{\frac {1}{2h_{n+1}}} \left( [-r_n^2y_{n+1} + (1 + 2r_n)y_{n+2}] - [-r_n^2y_{n} + (1 + 2r_n)y_{n+1}] \right) = 0.
\end{equation}
Using $h_{n+1}=\varphi_{n+1/2}/N$, we can factor out the simple integrator to obtain
\begin{equation}\label{bdf2extrafac}
-{\frac {r_n^2}{2\varphi_{n+1/2}}}\,{\frac {y_{n+1}-y_n}{1/N}} + {\frac {1+2r_n}{2\varphi_{n+1/2}}}\,{\frac {y_{n+2}-y_{n+1}}{1/N}} = 0.
\end{equation}
Introducing $u_n/N = y_{n+1}-y_n$, the ``extraneous recursion'' becomes
\begin{equation}\label{bdf2extrafac2}
-{\frac {r_n^2}{2\varphi_{n+1/2}}}\,u_n + {\frac {1+2r_n}{2\varphi_{n+1/2}}}\,u_{n+1} = 0.
\end{equation}
As the subsequent Euler integration $y_{n+1} = y_n + u_n/N$ is stable (cf.\ Theorem \ref{FactorizationLemma}), the composite scheme is stable provided that the one-step recursion (\ref{bdf2extrafac2}) is stable. Obviously, $|u_{n+1}| \leq |u_n|$ provided that
$$
{\frac {r_n^2}{1+2r_n}} \leq 1,
$$
which holds for $0 < r_n \leq 1 + \sqrt{2}$. This bound on the step ratio is the same as the classical bound found in \cite[p.\ 405--406]{HairNW1993}.

In terms of the (near) Toeplitz operators used above, the variable step size extraneous operator is given by
$$
R_{2,N}(\varphi) =
 {\frac {1}{2}}\begin{pmatrix}
  1+2r_1 & 0 & 0 & 0 & 0\\
  -r_2^2 & 1+2r_2 & \cdots & 0 & 0\\
  & -r_3^2 & 1+2r_3 & \cdots & 0 \\
 & \ddots  & \ddots  & \ddots & \vdots  \\
  0 & 0 &  & -r_N^2 & 1+2r_N
 \end{pmatrix}.
$$
The operator $R^{-1}_{2,N}(\varphi)$ is bounded whenever the lower logarithmic max norm,
\begin{equation}\label{lowermR2}
m_{\infty}[R_{2,N}(\varphi)] = \min (1 + 2r - r^2) > 0
\end{equation}
along the range of step ratios $r$. Diagonal dominance requires that $1+2r-r^2 > 0$, which holds if $0 < r < 1 + \sqrt{2}$, so the classical bound is obtained once more. As we assume a smooth grid in terms of (\ref{stepratio}), with $\dot\mu = \varphi'/\varphi \in L^{\infty}[0,1]$, the condition $r_n < 1 + \sqrt{2}$ is fulfilled for
\begin{equation}\label{minimumN}
N > N^* = {\frac {\|\varphi'/\varphi\|_{\infty}}{\sqrt{2}}} = {\frac {\|\dot\mu\|_{\infty}}{\sqrt{2}}}.
\end{equation}

In general, however, a method can be zero stable without diagonal dominance, requiring more elaborate techniques to establish zero stability. The variable step size discretization (\ref{LMMhom1}) of $\dot y=0$ is factorized to obtain the difference equation corresponding to the extraneous operator only,
\begin{equation}\label{LMMhomG1}
\sum_{j=0}^{k-1} \gamma_{j,n}u_{n+j} = 0,
\end{equation}
where the coefficients $\gamma_{j,n}$ are multivariate rational functions of $k-1$ consecutive step size ratios. If the sequence $u$ is bounded (zero stability), then the original solution $y$ of (\ref{LMMhom1}) is obtained by simple Euler integration, $y_{n+1} = y_n + u_n/N$, where $h=1/N$ is a constant step size and $N\rightarrow \infty$. Since the latter integration is stable, we only need to bound the solutions $u$ of (\ref{LMMhomG1}). Using (\ref{stepinc}), we write the step ratios
$$
r_n = 1 + v_n,
$$
where, for smooth grids,
$$
|v_n| \leq {\frac {1}{N}} \left\| {\frac {\varphi'}{\varphi}} \right\|_{\infty}.
$$
Thus, the larger the value of $N$, the closer is $|v_n|$ to zero. Now, for $v_n\equiv 0$ we obtain the classical constant step size method. The difference equation (\ref{LMMhomG1}) can then be rearranged as a Toeplitz system $T_0 u = U_0$, where $T_0 = R_{2,N}(1)$ and $u = \{u_n\}_1^N$ denotes the entire solution. The vector $U_0$ contains initial data as needed. By Theorem \ref{T0inv}, we have $\|T_0^{-1}\|_{\infty} \leq C_0$ for all $N\geq 1$.

With variable steps, the system will depend on the step ratios, and the overall system matrix will no longer be Toeplitz. Nevertheless, for the BDF2 example used above, we have seen that the extraneous system matrix can be written
\begin{align*}
R_{2,N}(\varphi) &=
 {\frac {1}{2}}\begin{pmatrix}
  3+2v_1 & 0 & 0 & 0 & 0\\
  -1 - 2v_2 - v_2^2 & 3+2v_2 & \cdots & 0 & 0\\
  & -1 - 2v_3 - v_3^2 & 3+2v_3 & \cdots & 0 \\
 & \ddots  & \ddots  & \ddots & \vdots  \\
  0 & 0 &  & -1 -2v_N - v_N^2 & 3+2v_N
 \end{pmatrix}\\
 &=
 T_0 + V\begin{pmatrix}
  1 & 0 & 0 & 0 & 0\\
  -1 & 1 & \cdots & 0 & 0\\
  & -1 & 1 & \cdots & 0 \\
 & \ddots  & \ddots  & \ddots & \vdots  \\
  0 & 0 &  & -1 & 1
 \end{pmatrix} + {\frac {V^2}{2}}\begin{pmatrix}
  0 & 0 & 0 & 0 & 0\\
  -1 & 0 & \cdots & 0 & 0\\
  & -1 & 0 & \cdots & 0 \\
 & \ddots  & \ddots  & \ddots & \vdots  \\
  0 & 0 &  & -1 & 0
 \end{pmatrix}.
\end{align*}
Thus we can write
\begin{equation}\label{powerseries}
R_{2,N}(\varphi) = T_0 + VT_1 + V^2T_2 = (I + VT_1T_0^{-1} + V^2T_2T_0^{-1})T_0,
\end{equation}
where the $T_j$ are Toeplitz and $V = \mathrm {diag}(v_j)$ is a diagonal matrix. Since $T_0^{-1}$ is uniformly bounded, a sufficient condition for $R_{2,N}(\varphi)$ to be invertible is
\begin{equation}\label{nonlincond}
\|V\|_{\infty}\cdot\|T_1T_0^{-1}\|_{\infty} + \|V\|_{\infty}^2\cdot\|T_2T_0^{-1}\|_{\infty} < 1,
\end{equation}
and we obtain the bound
\begin{equation}\label{invboundBDF2}
\|R_{2,N}(\varphi)^{-1}\|_{\infty} \leq {\frac {\|T_0^{-1}\|_{\infty}}{1-\|V\|_{\infty}\cdot\|T_1T_0^{-1}\|_{\infty}-\|V\|_{\infty}^2\cdot\|T_2T_0^{-1}\|_{\infty}}}.
\end{equation}
Here the $\|T_jT_0^{-1}\|_{\infty}$ are method dependent constants, and
\begin{equation}\label{Vbound}
\|V\|_{\infty} = {\frac {1}{N}} \left\| {\frac {\varphi'}{\varphi}} \right\|_{\infty}.
\end{equation}
We can now determine a sufficient condition on $\|V\|_{\infty}$ in general, and on $N$ in particular, such that (\ref{nonlincond}) is satisfied. Because $w := \|V\|_{\infty} = \mathrm O(N^{-1})$ if the grid is regular, there is always an $N$ large enough to satisfy this condition. Considering the equation
\begin{equation}\label{condzero}
w \cdot\|T_1T_0^{-1}\|_{\infty} + w^2\cdot\|T_2T_0^{-1}\|_{\infty} = 1,
\end{equation}
we find that we have to take $N$ large enough to guarantee that
$$
{\frac {1}{N}} \left\| {\frac {\varphi'}{\varphi}} \right\|_{\infty} < {\frac {-\|T_1T_0^{-1}\|_{\infty} + \sqrt{\|T_1T_0^{-1}\|_{\infty}^2 + 4\|T_2T_0^{-1}\|_{\infty}}}{2\|T_2T_0^{-1}\|_{\infty}}}.
$$
The quantity on the right hand side depends only on the method coefficients, and the left hand side depends only on the total number of steps, and the regularity of the nonuniform grid, as measured by $\|\varphi'/\varphi\|_{\infty}$.

\section{Zero stability on nonuniform grids -- Higher order methods}

In a $k$-step method using variable steps, the coefficients depend on $k-1$ step ratios. This makes the problem significantly more difficult. Without loss of generality, we will only consider an approach linear in $V$ below. Note that while $\|V\|_{\infty} = \mathrm O(N^{-1})$, it follows that higher powers of $V$ are $\|V\|_{\infty}^k = \mathrm O(N^{-k})$, implying that they are significantly smaller than the first order term when $N$ is large and the grid is smooth. For example, in (\ref{condzero}) above, we have $w=\mathrm O(N^{-1})$ implying that the $w^2$ is negligible as $N\rightarrow \infty$; it is therefore sufficient to consider terms of order $\mathrm O(N^{-1})$ only. This overcomes the added difficulty of considering $k$-step methods.

The procedure for a general $k$-step method follows the same pattern as the in the previous examples. Neglecting quadratic and higher order terms in $V$, the extraneous operator is
\begin{equation}\label{general}
R_{k,N}(\varphi) = T_0 + \sum_{j=1}^{k-1} V_jT_j  = \left( I + \sum_{j=1}^{k-1} V_jT_jT_0^{-1} \right) T_0 + \mathrm O(N^{-2}).
\end{equation}
The diagonal matrices $V_j$ only differ in the diagonal elements being successively shifted down the diagonal. Assume that $\log\varphi \in C^2[0,1]$. By (\ref{stepinc}) and the mean value theorem,
$$
v_{n+1} - v_n = {\frac {1}{N}} \left( {\frac {\varphi'(\tau_{n+1})}{\varphi(\tau_{n+1})}} - {\frac {\varphi'(\tau_{n})}{\varphi(\tau_{n})}} \right) \approx 1 + {\frac {1}{N^2}}\cdot {\frac {\varphi\varphi'' - (\varphi')^2 }{\varphi^2 }},
$$
evaluating $\varphi$ and its derivatives at $\tau_{n+1/2}$. It follows that $V_{j+1} = V_j + \mathrm O(N^{-2})$, and that all $V_j$ can be replaced by a single matrix, $V$, while only incurring $\mathrm O(N^{-2})$ perturbations. Further, (\ref{Vbound}) holds for all $V_j$.

Since $\|T_0^{-1}\|_{\infty} \leq C_0$, a sufficient condition for the extraneous operator $R_{k,N}(\varphi)$ to have a uniformly bounded inverse is
\begin{equation}\label{final}
{\frac {1}{N}} \left\| {\frac {\varphi'}{\varphi}} \right\|_{\infty} \cdot \sum_{j=1}^{k-1} \|T_jT_0^{-1}\|_{\infty} \: < \: 1.
\end{equation}
This condition separates grid smoothness $\|\varphi'/\varphi\|_{\infty}$ from method parameters, as represented by the Toeplitz matrices $T_j$. Thus, in order to prove zero stability as $N\rightarrow \infty$, we need $\|T_j\| \leq C_j$ for $j\geq 1$. The latter condition is easily established, once the coefficients' dependence on the step ratios has been established. Hence we have the following general result.

\begin{theorem}\label{mainresult}
For all smooth maps $\Phi$ there exist constants $N^*$ and $C_{\varphi}$ (independent of $N$) such that $\|R_{k,N}^{-1}(\varphi)\|_{\infty} \leq C_{\varphi}$ for $N > N^*$, whenever $\|R_{k,N}^{-1}(1)\|_{\infty} \leq C_0$ for all $N$.
\end{theorem}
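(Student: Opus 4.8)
The plan is to regard $R_{k,N}(\varphi)$ as a perturbation of the constant step size Toeplitz operator $T_0 = R_{k,N}(1)$ and to invert it through a Neumann series, exactly as in the BDF2 case of (\ref{powerseries})--(\ref{invboundBDF2}). Starting from the linearized factorization (\ref{general}), I would first reduce the several diagonal matrices $V_j$ to a single matrix $V = \mathrm{diag}(v_n)$. Since $\log\varphi\in C^2[0,1]$, the mean value theorem gives $V_{j+1}=V_j+\mathrm O(N^{-2})$, so replacing every $V_j$ by $V$ alters each (finitely banded) row of the operator by only $\mathrm O(N^{-2})$. Writing $E = V\sum_{j=1}^{k-1}T_jT_0^{-1}$ and collecting both the neglected quadratic terms in $V$ and these substitution errors into a single remainder $\tilde E$, I obtain
$$
R_{k,N}(\varphi)=(I+E+\tilde E)\,T_0 .
$$

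The second step is to estimate the two perturbations uniformly in $N$. For $E$, submultiplicativity gives $\|E\|_\infty\le\|V\|_\infty\sum_{j=1}^{k-1}\|T_j\|_\infty\,\|T_0^{-1}\|_\infty$. Here $\|V\|_\infty = N^{-1}\|\varphi'/\varphi\|_\infty$ by (\ref{Vbound}), $\|T_0^{-1}\|_\infty\le C_0$ by hypothesis (Theorem \ref{T0inv}), and each $T_j$ is a banded Toeplitz matrix with a fixed number of nonzero diagonals; hence $\sum_j\|T_j\|_\infty\le C_k$ for a method-dependent constant $C_k$ independent of $N$, and $\|E\|_\infty\le N^{-1}\|\varphi'/\varphi\|_\infty\,C_0C_k$. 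For $\tilde E$, the quadratic terms are of the form $V^2T$ with $T$ banded, and the substitution errors are $(V_j-V)T_j$; both are banded with per-row entries $\mathrm O(N^{-2})$, so that even after multiplication by the full matrix $T_0^{-1}$ one has $\|\tilde E\|_\infty\le C_0\cdot\mathrm O(N^{-2})=\mathrm O(N^{-2})$. Consequently $\|E+\tilde E\|_\infty=\mathrm O(N^{-1})$, and there is an $N^*$ (determined by (\ref{final})) with $\|E+\tilde E\|_\infty<1$ for all $N>N^*$.

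The final step is the Neumann argument. For $N>N^*$ the factor $I+E+\tilde E$ is invertible with $\|(I+E+\tilde E)^{-1}\|_\infty\le(1-\|E+\tilde E\|_\infty)^{-1}$, whence
$$
\|R_{k,N}^{-1}(\varphi)\|_\infty\le\frac{\|T_0^{-1}\|_\infty}{1-\|E+\tilde E\|_\infty}\le\frac{C_0}{1-N^{-1}\|\varphi'/\varphi\|_\infty\,C_0C_k}=:C_\varphi ,
$$
a bound that is finite and independent of $N$ for $N>N^*$, which is the assertion.

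I expect the main obstacle to lie not in the Neumann step but in the uniform bounds underpinning it, and here the \emph{bandedness} of the near-Toeplitz operators is decisive. Bandedness with fixed width $k$ is what keeps each $\|T_j\|_\infty$ bounded independently of $N$ (only $k+1$ entries per row) and what turns a per-entry $\mathrm O(N^{-2})$ error into an $\mathrm O(N^{-2})$ contribution to the max-row-sum norm rather than an accumulation across all $N$ rows. The genuinely method-specific part is to expand the variable coefficients $\gamma_{j,n}$ of (\ref{LMMhomG1}) to first order in the increments $v_n$ and to verify that the resulting linearized coefficients are step-ratio-independent constants, so that the $T_j$ are truly Toeplitz up to boundary rows and the constant $C_k$ is well defined; this is the technical heart on which the clean separation (\ref{final}) of grid smoothness from method parameters rests.
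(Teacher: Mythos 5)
Your proposal follows essentially the same route as the paper: linearize $R_{k,N}(\varphi)$ in the increments $V$, use $\log\varphi\in C^2[0,1]$ and the mean value theorem to collapse the shifted diagonals $V_j$ into a single $V$ up to $\mathrm O(N^{-2})$, invoke $\|T_0^{-1}\|_{\infty}\leq C_0$ from Theorem \ref{T0inv}, and close with the Neumann-series bound that yields exactly the sufficient condition (\ref{final}). If anything, you are slightly more careful than the paper in explicitly carrying the $\mathrm O(N^{-2})$ remainder $\tilde E$ through the inversion argument rather than discarding it before the Neumann step, but the substance is identical.
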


To illustrate the general theory, we consider the variable step size BDF3 method. Slightly modifying the conventions set out in Section 2, we define
\begin{equation}\label{conventions}
h_{n-1}=t_{n}-t_{n-1}, \quad r_1=\frac{h_{n-1}}{h_{n-2}}, \quad r_2=\frac{h_{n-2}}{h_{n-3}},
\end{equation}
where $r_1=1+v_1$ and $r_2=1+v_2$ denote the step ratios that will occur in a single row of the Toeplitz operator. Naturally, these values change from one row to the next, as they depend on $n$ as indicated by (\ref{conventions}). Within this setting, after deflating the operator, we obtain a recursion on a nonuniform grid corresponding to
$$
\gamma_0(r_1,r_2)z_{n-2} + \gamma_1(r_1,r_2)z_{n-1} + \gamma_2(r_1,r_2)z_n = 0,
$$
where
\begin{align*}
\gamma_2(r_2,r_1) &= \frac{4r_1r_2+r_2+3r_1^2r_2+2r_1+1}{r_2+r_1^2r_2+2r_1r_2+r_1+1} \\
\gamma_1(r_2,r_1) &= -\frac{r_1^2(4r_1r_2^2+r_1^2r_2^2+1+2r_1r_2+3r_2+3r_2^2)}{(r_2+r_1^2r_2+2r_1r_2+r_1+1)(r_2+1)} \\
\gamma_0(r_2,r_1) &= \frac{(r_1+1)r_1^2r_2^3}{(r_2+1)(r_1r_2+r_1+1)}.
\end{align*}
The coefficients are normalized so that $\beta_{3,n}=1$. (In a general analysis, they are normalized by $\beta_{k,n}=1$, cf.\ (\ref{LMMvar})). By writing $r_j=1+v_j$, where $v_j = \mathrm O(N^{-1})$, we obtain
\begin{eqnarray*}
\gamma_2(v_2,v_1) &=& \frac{11+8v_2+12v_1+10v_1v_2+3v_1^2+3v_1^2v_2}{6+4v_2+5v_1+4v_1v_2+v_1^2+v_1^2v_2}\\
\gamma_1(v_2,v_1) &=& -\frac{(1+v_1)^2(14+21v_2+8v_2^2+8v_1+14v_1v_2+6v_1v_2^2+v_1^2+2v_1^2v_2+v_1^2v_2^2)}{(6+4v_2+5v_1+4v_1v_2+v_1^2+v_1^2v_2)(2+v_2)}\\
\gamma_0(v_2,v_1) &=& \frac{(2+v_1)(1+v_1)^2(1+v_2)^3}{(2+v_2)(3+2v_2+v_1+v_1v_2)}.
\end{eqnarray*}
Since $v_j = \mathrm O(N^{-1})$ we drop higher order terms to obtain
\begin{eqnarray*}
\gamma_2(v_2,v_1) &\approx & {\frac {11 + 12v_1 + 8v_2}{6 + 5v_1 + 4v_2}} \approx \frac{66+17v_1+4v_2}{36}\\
\gamma_1(v_2,v_1) &\approx & -{\frac {14 + 36v_1 + 21v_2}{12 + 16v_1 + 8v_2}} \approx -\frac{42+73v_1+14v_2}{36}\\
\gamma_0(v_2,v_1) &\approx & {\frac {2 + 5v_1 + 6v_2}{6 + 2v_1 + 7v_2}} \approx \frac{12+26v_1+22v_2}{36}.
\end{eqnarray*}
We can now identify three lower triangular Toeplitz operators, with diagonal elements in boldface,
\begin{eqnarray*}
T_0 &=&  \frac {1}{36} \: [\: 12  \quad -42 \quad \mathbf{66} \: ]\\
T_1 &=& \frac {1}{36} \: [\: 26 \quad -73 \quad \mathbf{17} \: ]\\
T_2 &=& \frac {1}{36} \: [\: 22 \quad -14 \quad \mathbf{4} \: ].
\end{eqnarray*}
These correspond to the $T_j$ matrices in (\ref{final}), and the matrices $V_1$ and $V_2$ are just diagonal matrices collecting the sequences of $v_1$ and $v_2$ values along the grid.

Because $v_2-v_1 = \mathrm O(N^{-2})$, we may consider a further simplification, putting $v_2 = v_1$, or, equivalently, $r_2 = r_1$. This corresponds to ``ramping up'' the step size at an exponential rate, and is particularly challenging to zero stability. In such a case, we may consider $T_0 + V(T_1+T_2)$, with elements rescaled to have a common denominator,
\begin{eqnarray*}
T_0 &=&  \frac {1}{12} \: [\: 4  \quad\:\:\: -14 \quad \mathbf{22} \: ]\\
(T_1 + T_2)v &=& \frac {v}{12} \: [\: 16  \quad -29 \quad\:\:\: \mathbf{7} \: ].
\end{eqnarray*}
Here the diagonal dominance of $T_0$ is sufficient to derive a condition for zero stability. We can thus compute the lower logarithmic max norm,
$$
m_{\infty}[T_0 + (T_1+T_2)v] = {\frac {22+7v - |14+29v| - |4+16v|}{12}} = {\frac {2 - 19v}{6}}.
$$
where we have assumed that $v > -1/4$, allowing the removal of absolute values. Thus $m_{\infty}[T_0 + (T_1+T_2)v] > 0$ if $v < 2/19$. By requiring
$$
\|V\|_{\infty} = {\frac {1}{N}} \left\|{\frac {\varphi'}{\varphi}}\right\|_{\infty} < {\frac {2}{19}}
$$
the operator $T_0 + V\cdot (T_1+T_2)$ has a uniformly bounded inverse. The corresponding zero stability condition is
$$
N > N^* = {\frac {19}{2}} \left\| {\frac {\varphi'}{\varphi}} \right\|_{\infty}.
$$
For BDF3 \cite[p.\ 406]{HairNW1993} cite Grigorieff's (1983) \emph{sufficient} conditions for zero stability,
$$
0.836 < {\frac {h_k}{h_{k-1}}} < 1.127.
$$
Our BDF3 bounds for ramp-up provide the conditions
$$
0.75 = 1 - {\frac {1}{4}} < {\frac {h_n}{h_{n-1}}} < 1 + {\frac {2}{19}} \approx 1.105.
$$
The differences between these results depend on the methodology, and not least on the choice of norm. The deflation approach used here is similar to the technique used in \cite{Grigor1983}, while smooth grid maps are akin to the assumptions used in \cite{GearTu1974}.

It is important to note that we do not try to determine the greatest possible step size increase, but instead prove that every strongly stable method will be zero stable on smooth grids. We have also seen that the complexity of determining exact stability bounds quickly becomes overwhelming, which is why we argue that an alternative proof, revealing the dependence on smoothness and method parameters, is sufficient.

\section{Conclusions}

In this paper we have demonstrated that any linear multistep method which is strongly stable on a uniform grid is also zero stable on any smooth nonuniform grid. Grid smoothness is (in theory) determined by a grid map $\Phi:[0,1] \rightarrow [0,1]$, satisfying $\Phi(0)=0$ and $\Phi(1)=1$, and having a strictly positive derivative $\varphi = \Phi'$. The grid map transforms a uniform grid of $N$ steps into a nonuniform grid, which is smooth if $\log \varphi$ is continuously differentiable.

In practice, this corresponds to a smooth step size variation, where the step size at time $t\in[0,1]$ can be represented by a continuous modulation function, so that $h(t) = \mu(t)/N$. Here $\dot\mu(t) = \varphi'/\varphi$, which must remain bounded. The modulation function $\mu(t)$ is determined by the solution of the differential equation, while $N$ is determined by the accuracy requirement as specified by the tolerance $\varepsilon$.

The main result is that every $k$-step method is associated with $k$ bounded Toeplitz operators $T_0,\dots T_{k-1}$, where $T_0$ is associated with the constant step size method. If that method is strongly zero stable, then $T_0$ has a bounded inverse. Smooth step size variation is characterized locally by the function $\varphi'/\varphi$, the magnitude of which determines how many steps $N$ that need to be taken in order to guarantee variable step size zero stability. Thus, if
$$
{\frac {1}{N}} \left\| {\frac {\varphi'}{\varphi}} \right\|_{\infty} \cdot \sum_{j=1}^{k-1} \|T_jT_0^{-1}\|_{\infty} \: < \: 1.
$$
the numerical solution to $\dot y = 0$ is stable. Examples are given for BDF methods.

This result is also practically significant as it implies that time step adaptivity must be implemented using smooth step size changes, such that consecutive step ratios are $r = 1 + \mathrm O(h)$. This can easily be achieved, as there is a wide range of smooth controllers available for dedicated purposes, \cite{Soderl2003}. These are based on digital filter theory, and control $\log h$ in small increments, changing the step size on every step. Since $h \sim \varphi/N$, such a controller keeps $\log \varphi$ smooth, in line with the assumptions of Theorem \ref{mainresult}. The smoothness requirement is local, and does not imply any bound on $h_{\mathrm {max}}/h_{\mathrm {min}}$. It is therefore not a limitation in stiff computation, where overall step size variation necessarily is large.

\begin{acknowledgements}
The authors gratefully acknowledge the contribution of Prof.\ Carmen Ar\'evalo, who provided the grid-independent variable step size coefficients for the BDF3 method, computed in \textsc{Maple}.
\end{acknowledgements}

\end{document}